\theoremstyle{theorem}
\newtheorem{theorem}{Theorem}
\theoremstyle{definition}
\newtheorem*{definition}{Definition}
\newtheorem*{remark}{Remark}
\newtheorem{lemma}{Lemma}
\begin{document}

\title{Cubes and Boxes have Rupert's passages in every direction\footnote{This article has been accepted for publication in American Mathematical Monthly, published by Taylor \& Francis.}}
\markright{Rupert's passages}
\author{Andr\'{a}s Bezdek\footnote{Supported by NKFIH grant KKP-133864.} \and
        Zhenyue Guan \and  Mih\'aly Hujter \and Antal Jo\'os }

\maketitle

\begin{abstract}
It is a $300$ year old counterintuitive observation of Prince Rupert of Rhine that in cube a straight tunnel can be cut, through which a second congruent cube can be passed. Hundred years later P. Nieuwland generalized Rupert's problem and asked for the largest aspect ratio so that a larger homothetic copy of the same body can be passed. We show that cubes and in fact all rectangular boxes have Rupert's passages in every direction, which is not parallel to the  faces.  In case of the cube it was assumed without proof that the solution of the Nieuwland's problem is a tunnel perpendicular to the largest square contained by the cube. We prove that this unwarranted assumption is correct not only for  the cube, but also for all other rectangular boxes.
\end{abstract}


\section{Rupert's passages and Nieuwland's constants}

It is commonly agreed that Prince Rupert of the Rhine (1619-1682) was the first person, who posed the following  puzzle.:

\medskip

\noindent Rupert's passage problem: {\it  Cut a hole in a cube, through which another cube of the same size shall be able to pass.}

\begin{figure}[h]
\begin{center}
\includegraphics[scale=.6]{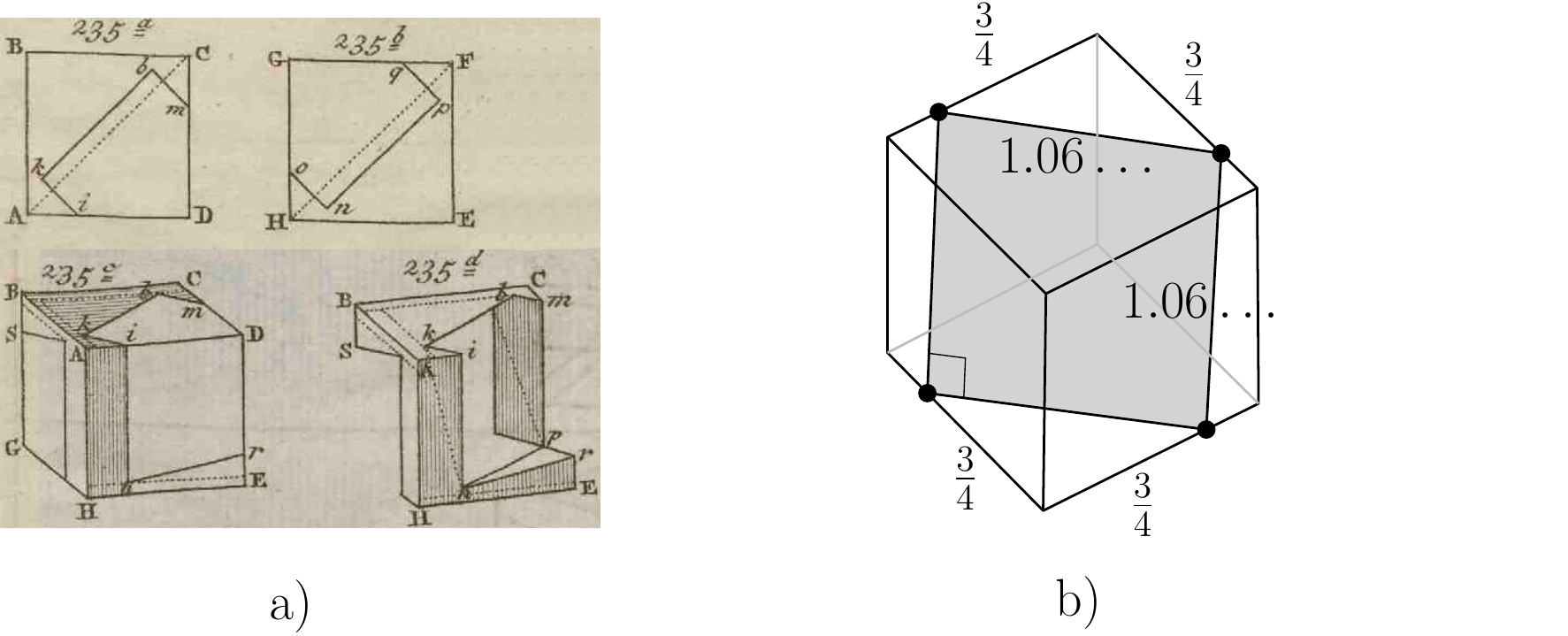}
\caption{ The cube is Rupert. a) is a visual proof from 1816 \cite{swinden16}; b) is a proof without words from today.}
\label{Ruperttunnel}
\end{center}
\end{figure}

The affirmative answer of Rupert's passage problem is attributed to P. Nieuwland and was published in a paper of J.H. van Swinden \cite{swinden16} in 1816. It is common to say that {\it the cube is Rupert}  (see the survey papers \cite{rickey05} and \cite{jerrard04}).  On Figure~\ref{Ruperttunnel}b the four points marked by dots partition their edges in the ratio  $1 : 3$. Repeated use of Pythagorean theorem reveals that these four points form a square of edge length $\frac{3\sqrt 2}4 = 1.06 \dots$ allowing to cut a suitable perpendicular passage \cite{gardner01}.

Rupert's passage problem sounds paradoxial at first. After realizing that the problem is about finding two shadows so that one shadow fits inside the other, the problem becomes manageable and also triggers generalizations.  Which solids are Rupert? If a solid is Rupert, what is the maximum size of the second body so that it still can be moved through the first one?  The second question was asked  for cubes  by P. Nieuwland (1764-1794) almost a century after Rupert's question was posed. The later problem is referred to as {\it Nieuwland's passage problem} and the homothetic constant corresponding to the largest size (or rather the supremum of the possible sizes) is called {\it Nieuwland's constant}. Several survey papers popularized generalizations over the years, including papers of J.H. van Swinden \cite{swinden16} in 1816, of A. Ehrenfeucht \cite{ehrenfeucht64} in 1964, of D.J.E. Schrek \cite{schrek50} in 1950, of  M. Gardner \cite{gardner01} in 2001 and of  V.F. Rickey \cite{rickey05} in 2005.

Over the years various solids were studied concerning Rupert's passage problem. Spheres and bodies of constant widths are obviously not Rupert. Positive answers were found for tetrahedron and for octahedron by Scriba \cite{scriba68} in 1968, for rectangular boxes by Jerrard and Wetzl \cite{jerrard04} in 2004, for universal stoppers by Jerrard and Wetzl in \cite{jerrard08} in 2008, for dodecahedron and for icosahedron by Jerrard, Wetzl and Yuan \cite{jerrard17} in 2017, for eight Archimedean solids by Chai, Yuan and Zamfirescu \cite{chai18} in 2018 and for $n$-cubes by Huber, Schultz and Wetzl \cite{huber18} in 2018.  The most intriguing open question in this area is wether every convex polyhedron is Rupert (see \cite{jerrard17}).

\section{New results concerning  Rupert's passages}

Neither Rupert nor Nieuwland, were precise about how they want to pass a cube through the other one. According to standards at their time, using words like 'passage' and 'tunnel' was enough to indicate that motion is expected to  be a translation. There is another assumption which requires rigorous proof. We noticed this detail in a paper of Jerrard and Wetzel who wrote several interesting papers on Rupert's passages.    \cite{jerrard04} appeared  in this Monthly, and is about the solution of  Nieuwland's type problem for passing  rectangular boxes through a unit cube. In the introduction  the authors state a comonly used unwarranted assumption

\noindent
\begin{quote}
  {\it Nieuwland's passage problem of finding the largest cube that can pass through a unit cube is equivalent to finding the largest square that fits in the unit cube, because once the largest square is located, the hole through the cube having that largest square as its cross section clearly provides the desired passage.}
\end{quote}

\noindent Indeed, once the largest homothetic square  is located, the hole raised perpendicularly over this square clearly provides the desired passage. The other direction of the equivalency statement can not be assumed without proof.  It is very unlikely that Rupert and Nieuwland wanted to restrict passages to tunnels raised perpendicularly over squares contained in the unit cube. Thus, before this equivalency issue is settled the constant found by Nieuwland or later by other authors are only lower bounds for the Nieuwland's constants of the cube and of other solids.

The main goals of this paper were i) to show that cubes are Rupert in every direction and ii) to show equivalency  of the later two problems for cubes. Along the way we noticed that essentially the same arguments, with some modifications, prove the analogous theorems for all rectangular boxes. We will prove

\begin{theorem} \label{rectangle-in-projection}
The interior of every hexagonal projection of a rectangular box with sides $a \leq b \leq c$ contains a rectangle with sides $a$ and $b$. Equivalently, rectangular boxes have Rupert's passages in every direction not parallel to faces. In particular, cubes have Rupert's passages in every direction not parallel to faces.
\end{theorem}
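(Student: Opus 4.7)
The plan is to exploit the Minkowski-sum description of the hexagon. Place $B=[0,a]\times[0,b]\times[0,c]$; by the symmetries of the box and the symmetries of the projection direction $\mathbf v=(v_1,v_2,v_3)$, I may assume $v_1,v_2,v_3>0$. The non-parallelism-to-faces hypothesis is then $v_i\ne 0$ for all $i$, and the orthogonal projection $\pi$ along $\mathbf v$ sends $B$ to the centrally symmetric hexagon
$$
H \;=\; \bigl\{\,t_1a\,\pi(\mathbf e_1)+t_2b\,\pi(\mathbf e_2)+t_3c\,\pi(\mathbf e_3)\,:\,0\le t_i\le 1\,\bigr\},
$$
the Minkowski sum of three line segments.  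Because orthogonal projection preserves widths of convex bodies, the width of $H$ in any unit direction $\mathbf w$ lying in the projection plane equals $|w_1|a+|w_2|b+|w_3|c$.

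I would then try to inscribe the $a\times b$ rectangle centered at the centroid of $H$ whose $a$-sides are parallel to $\mathbf u:=\pi(\mathbf e_1)/|\pi(\mathbf e_1)|$ (the direction of the projected $a$-edge) and whose $b$-sides are parallel to its perpendicular $\mathbf u^{\perp}$ in the projection plane.  A short calculation gives $\mathbf u=(1-v_1^2,-v_1v_2,-v_1v_3)/\sqrt{1-v_1^2}$ and $\mathbf u^{\perp}=(0,v_3,-v_2)/\sqrt{1-v_1^2}$, so the relevant widths are
$$
w_H(\mathbf u)=\frac{a(v_2^2+v_3^2)+v_1(bv_2+cv_3)}{\sqrt{1-v_1^2}},\qquad w_H(\mathbf u^{\perp})=\frac{bv_3+cv_2}{\sqrt{1-v_1^2}}.
$$
The necessary width conditions $w_H(\mathbf u^{\perp})\ge b$ and $w_H(\mathbf u)\ge a$ both hold: the first follows from $c\ge b$ combined with $v_2+v_3\ge\sqrt{v_2^2+v_3^2}$; setting $r=\sqrt{v_2^2+v_3^2}\in(0,1)$, the second reduces after squaring to $b^2(1+r)\ge a^2(1-r)$, which is immediate from $b\ge a$.

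The harder step is to verify that each vertex of the rectangle actually lies in $H$, not merely that the widths are large enough. By the central symmetry of $H$ only two vertices need to be checked, and for each of them membership in $H$ is equivalent to solving $t_1a\,\pi(\mathbf e_1)+t_2b\,\pi(\mathbf e_2)+t_3c\,\pi(\mathbf e_3)=\text{vertex}$ inside the cube $[0,1]^3$; the solution set of this underdetermined linear system is a line in $(t_1,t_2,t_3)$-space, and the task is to show this line meets $[0,1]^3$.  The main obstacle is that the uniform orientation $\mathbf u=\pi(\mathbf e_1)/|\pi(\mathbf e_1)|$ need not work for every $\mathbf v$: for projection directions close to $\mathbf e_3$ with $v_1$ dominating $v_2$, a corner of the proposed rectangle can poke through one of the diagonal sides of $H$, and a symmetric alternative such as $\pi(\mathbf e_2)/|\pi(\mathbf e_2)|$ must be chosen instead.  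I therefore expect the proof to proceed with a small case split (depending, for instance, on the relative sizes of $av_2$ and $bv_1$) and to reduce each of the resulting vertex-containment inequalities to a polynomial statement that follows cleanly from $a\le b\le c$ and $v_1,v_2,v_3>0$.
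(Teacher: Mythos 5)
Your width computations are correct (and the Minkowski-sum description of $H$ is a clean way to get them), but the proof has a genuine gap exactly where you flag it: having width at least $b$ in the direction $\mathbf u^{\perp}$ and at least $a$ in the direction $\mathbf u$ is only a necessary condition for a centered $a\times b$ rectangle with those side directions to fit inside the hexagon, and you never carry out the vertex-containment step. Worse, you yourself observe that the placement you propose (centroid-centered, sides parallel to $\pi(\mathbf e_1)$) actually fails for some projection directions, so the argument as written does not merely have an unverified step --- it needs a different construction for part of the parameter range, and the promised case split on ``the relative sizes of $av_2$ and $bv_1$'' together with the reduction of each vertex condition to a polynomial inequality is precisely the hard content of the theorem. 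As it stands, nothing in the proposal establishes that \emph{some} choice of orientation and position works for \emph{every} non-face-parallel $\mathbf v$.

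For comparison, the paper avoids the centered placement entirely. It first shortens the long edges so the box becomes $a\times b\times b$, and then anchors the rectangle at a \emph{vertex} of the hexagon: Lemma~\ref{squareatcorner} shows that at a vertex $A$ of the shadow one can lay off two points $B$, $C$ at distance $a$ along the two incident sides, and the fourth vertex $D$ of the resulting square lies in $H$ precisely when $q+r\le 1+p$ (with $p,q,r$ the vertical components of the three edge directions), which holds whenever $p$ is not the smallest of the three --- hence at two of the three vertex pairs. The $a\times a$ square at such a corner is then stretched to an $a\times b$ rectangle using the two sides of $H$ that are projections of $b$-edges. The corner placement is what makes the containment check reduce to the single clean inequality $q+r\le 1+p$; if you want to salvage your centered approach, you would need to supply the analogous explicit inequalities for each vertex of the rectangle in each case of your split, which is likely to be substantially messier.
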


Using Theorem~\ref{rectangle-in-projection} we also prove

\begin{theorem} \label{largestsquare}  Let $B$ be a rectangular box with edge lengths $a \leq b \leq c$. If a homothetic box $\lambda B$ ($\lambda >0$) can be passed  through  $B$ by translation, then the interior of box $B$ contains rectangle with sides $\lambda a$ and $\lambda b$. With other words, the problem of finding Nieuwland constant of a given rectangular box $B$, is equivalent to finding the supremum of $\lambda$'s for which the interior of box $B$ contains a $\lambda$-homothetic copy of the smallest face of box $B$.
\end{theorem}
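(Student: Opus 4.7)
The plan is to apply Theorem~\ref{rectangle-in-projection} to $\lambda B$ and then lift the resulting rectangle in the projection back to a planar rectangle inside $B$. Let $v$ be the direction of translation and $\pi$ the plane orthogonal to $v$; a translational passage is exactly the statement that the orthogonal projection $Q$ of $\lambda B$ onto $\pi$ is contained in the interior of the projection $P$ of $B$ onto $\pi$. Theorem~\ref{rectangle-in-projection}, applied to $\lambda B$ in direction $v$, supplies a rectangle $R$ of dimensions $\lambda a \times \lambda b$ inside $\mathrm{int}(Q)$ whenever $Q$ is hexagonal; in the degenerate case in which $v$ is parallel to a face of $\lambda B$, the projection $Q$ is itself a rectangle whose side lengths are drawn from $\{\lambda a,\lambda b,\lambda c\}$ and, since $a\leq b\leq c$, it already contains a $\lambda a\times\lambda b$ sub-rectangle. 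In every case we produce a rectangle $R\subset\mathrm{int}(P)$ of sides $\lambda a$ and $\lambda b$.

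Next I would lift $R$ to a planar rectangle $\widetilde R\subset\mathrm{int}(B)$ of at least the same dimensions. Introduce coordinates $(u,w)$ on $\pi$ aligned with the sides of $R$ and let $t$ be the coordinate along $v$. For each $(u,w)\in P$ the fiber of $B$ is the non-empty interval $[t_-(u,w),t_+(u,w)]$; here $t_-$ is convex piecewise linear and $t_+$ is concave piecewise linear because $B$ is a polytope, and the fiber has strictly positive length throughout $R$ by the hypothesis $R\subset\mathrm{int}(P)$. The plan is to seek an affine function of the one-variable form $\phi(u,w)=\beta w+\gamma$ (or, by the symmetric argument, $\phi(u,w)=\alpha u+\gamma$) satisfying $t_-(u,w)\leq\phi(u,w)\leq t_+(u,w)$ everywhere on $R$. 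The restriction to a single-variable tilt is essential: a direct computation of the two side vectors $(\lambda a,0,0)$ and $(0,\lambda b,\beta\lambda b)$ shows that the graph $\widetilde R=\{(u,w,\phi(u,w)):(u,w)\in R\}$ is then an honest rectangle in $\mathbb R^3$ (its two pairs of opposite sides are perpendicular), with side lengths $\lambda a$ and $\lambda b\sqrt{1+\beta^2}\geq\lambda b$, so it contains the required $\lambda a\times\lambda b$ rectangle strictly inside $B$.

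Producing such a $\phi$ reduces to a one-dimensional sandwich problem between the convex function $F(w)=\max_u t_-(u,w)$ and the concave function $G(w)=\min_u t_+(u,w)$, where the extrema are taken along the $u$-edge of $R$ at height $w$; the standard argument (the secant of $F$ across the $w$-range is affine, lies above $F$ by convexity, and below $G$ by concavity) produces the required $\beta w+\gamma$ as soon as $F\leq G$ pointwise. The main obstacle I expect is verifying this pointwise inequality, equivalently the 1D Helly condition that at each height $w$ the fibers $[t_-(u,w),t_+(u,w)]$ along the $u$-edge of $R$ share a common $t$. The strict interior containment $R\subset\mathrm{int}(P)$ gives positive slack in every individual fiber, and the box structure of $B$ makes $t_-$ and $t_+$ each the envelope of at most three affine pieces, so I would verify the Helly condition by a short case analysis on which facet of $B$ is active along each edge of $R$; the freedom to swap the roles of the two coordinate directions $u$ and $w$ (and hence to use either form of $\phi$) should guarantee that at least one of the two orientations succeeds, completing the lift and hence the proof.
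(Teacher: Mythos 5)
Your first step---applying Theorem~\ref{rectangle-in-projection} to the shadow of $\lambda B$ to obtain a $\lambda a\times\lambda b$ rectangle $R$ inside the interior of the shadow of $B$---matches the paper. The gap is in the lifting step, and it is exactly the part you yourself flag as ``the main obstacle'': you never establish the pointwise sandwich condition $F\le G$ (equivalently, the one-dimensional Helly condition along the $u$-lines of $R$), nor the claim that it holds for at least one of the two orientations. This is not a routine verification. The rectangle produced by Theorem~\ref{rectangle-in-projection} sits at a corner of the small shadow $H_\lambda$, and nothing prevents an edge of $R$ from running close to an edge of the big shadow $P$; there the fibers $[t_-(u,w),t_+(u,w)]$ degenerate to points whose height varies linearly and non-constantly along that edge, so $\max_u t_->\min_u t_+$ and the Helly condition fails for that orientation. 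You would need to rule out that this happens for both orientations simultaneously, and the proposal offers no mechanism for doing so. More fundamentally, you are demanding a planar rectangle that is a graph over \emph{all} of $R$ of a one-variable affine function; the congruent rectangle that actually exists inside a slanted cross-section of the tube over $R$ may be rotated within its plane and project onto a proper, rotated subset of $R$ (this is precisely Case 2 of the paper's Lemma~\ref{squaretube}), so the object you are searching for need not exist in the form you prescribe.

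The paper circumvents all of this with two moves you are missing. First, since $R$ and the shadow $H_1$ of $B$ are both centrally symmetric, $R$ can be translated to be concentric with $H_1$ while remaining inside it. Second, one lifts two adjacent vertices $U,V$ of the centered rectangle to arbitrary points $U^*,V^*$ of $B$ above them and reflects these through the center of $B$; the four resulting points are automatically coplanar, form a parallelogram contained in $B$, and project exactly onto $R$. That parallelogram is a planar cross-section of the rectangular tube over $R$, so Lemma~\ref{squaretube} (a special case of the K\'os--T\"or\H ocsik theorem) produces a rectangle congruent to $R$ inside it, hence inside $B$. The centering plus the reflection trick is what replaces your unproven Helly step; without something of this kind the lift does not go through.
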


\section{Preliminary Lemmas}

In this section we prove three lemmas:

\begin{lemma} \label{cube-basics} Let $C$ be a unit cube in a general position in the $xyz$-coordinate space. Denote by $H$ the perpendicular projection of $C$ on the $xy$-plane (Figure~\ref{basic}). Then,
\begin{enumerate}
  \item the projection $H$ is either a central symmetrical hexagon with obtuse angles or a rectangle,
  \item the area of $H$ is equal to the length of the perpendicular projection of $C$ on the $z$-axis,
  \item $p^2+q^2 +r^2=1$, where $p,q$ and $r$ are the third coordinates of the three non-parallel normal vectors of the faces of $C$. Similar equations hold for the second and for the first coordinates.
\end{enumerate}
\end{lemma}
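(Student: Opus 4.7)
The starting point is to set notation: let $\mathbf{e}_1,\mathbf{e}_2,\mathbf{e}_3$ be the three unit edge-vectors of $C$, written in components as $\mathbf{e}_i=(p_i,q_i,r_i)$. Since these are pairwise orthogonal unit vectors, the matrix $E=[\mathbf{e}_1\,\mathbf{e}_2\,\mathbf{e}_3]$ is orthogonal and its rows are orthonormal as well. This gives Part~3 at once: the third row $(r_1,r_2,r_3)$ is a unit vector, and analogous identities hold for the first two coordinates. The planar projections $\mathbf{v}_i=(p_i,q_i)\in\mathbb{R}^2$ inherit two useful identities: $\mathbf{v}_i\cdot\mathbf{v}_j=-r_ir_j$ (from $\mathbf{e}_i\cdot\mathbf{e}_j=0$) and $|\det[\mathbf{v}_i\,\mathbf{v}_j]|=|r_k|$ (since the $z$-component of $\mathbf{e}_i\times\mathbf{e}_j=\pm\mathbf{e}_k$ equals the $2\times 2$ determinant $\det[\mathbf{v}_i\,\mathbf{v}_j]$).

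For Parts~1 and~2, I view $C$ as the Minkowski sum of three orthogonal edge-segments, so $H$ is the zonogon $\mathbf{v}_1[0,1]+\mathbf{v}_2[0,1]+\mathbf{v}_3[0,1]$, which is automatically centrally symmetric about $(\mathbf{v}_1+\mathbf{v}_2+\mathbf{v}_3)/2$. Part~2 is then direct: the area of a three-segment zonogon is the sum of its three parallelogram areas, so $|H|=|r_1|+|r_2|+|r_3|$; this same sum equals the $z$-extent of $C$, because the eight vertex $z$-coordinates span an interval of length $\sum_i|r_i|$. For Part~1 I split cases by how many $r_i$ vanish. If some $r_i=0$, the dot-product identity forces $\mathbf{v}_i\perp\mathbf{v}_j$ and $\mathbf{v}_i\perp\mathbf{v}_k$, hence $\mathbf{v}_j\parallel\mathbf{v}_k$, so the zonogon degenerates to a rectangle. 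Otherwise, after possibly replacing some $\mathbf{e}_i$ by $-\mathbf{e}_i$ (which does not change $C$ or $H$), I may assume all $r_i>0$, whence every $\mathbf{v}_i\cdot\mathbf{v}_j=-r_ir_j<0$: the three vectors are pairwise strictly obtuse.

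It then remains to derive the interior-angle bound from pairwise obtuseness. A short planar argument shows that any three vectors in $\mathbb{R}^2$ with pairwise strictly obtuse angles have consecutive circular gaps $\alpha,\beta,\gamma\in(\pi/2,\pi)$ summing to $2\pi$. Sorting the six zonogon-edge directions $\pm\mathbf{v}_i$ by argument and computing consecutive differences then yields interior angles $\alpha,\beta,\gamma,\alpha,\beta,\gamma$ in cyclic order, each strictly between $\pi/2$ and $\pi$, as claimed. The main obstacle is precisely this angular bookkeeping in Part~1: translating pairwise obtuseness of three vectors in the plane into a statement about the cyclic order of \emph{all six} zonogon-edge directions, and then reading off the interior angles. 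Everything else reduces to the three identities above together with the standard formula for the area of a zonogon.
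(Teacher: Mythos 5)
Your proof is correct. For Part 3 it is essentially the paper's own argument (an orthogonal matrix has orthonormal columns as well as rows), and for Part 2 the content also matches: the paper reads off $|r_k|$ as the area of the shadow of the face perpendicular to $\mathbf{e}_k$ via a dot product with the vertical unit vector, which is precisely your identity $|\det[\mathbf{v}_i\,\mathbf{v}_j]|=|r_k|$, and both proofs then sum the three contributions and compare with the $z$-extent $\sum_i|r_i|$. Where you genuinely diverge is Part 1. The paper settles it with a soft geometric observation --- a solid cube cannot reach the edge of an acute wedge from its convex side, supported only by a continuity argument --- which is the least formal step of their proof. You instead derive pairwise strict obtuseness of the projected edge vectors from $\mathbf{v}_i\cdot\mathbf{v}_j=-r_ir_j$ (after normalizing all $r_i>0$) and then do the angular bookkeeping on the zonogon explicitly; this is more computational but also more airtight, and it cleanly isolates the rectangle case $r_i=0$. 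Two points to tighten in a final write-up: first, the consecutive differences of the sorted edge directions are the exterior (turning) angles $\pi-\alpha$, $\pi-\beta$, $\pi-\gamma$, so the interior angles are their $\pi$-complements $\alpha,\beta,\gamma$ --- your phrasing elides this complementation, though the conclusion is right; second, you should record why no circular gap can exceed $\pi$ (the other two gaps would then sum to less than $\pi$ while each must exceed $\pi/2$), and why a gap of exactly $\pi$ is impossible (it would force $r_i^2+r_j^2=1$, hence $r_k=0$ by Part 3, contradicting the nondegeneracy assumption).
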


\begin{proof} [Proof of Lemma \ref{cube-basics}] Statement 1  follows from the visual geometric observation that in an infinite wedge bounded by two half planes with an acute angle, no solid cube can reach the edge of the wedge on its convex side. Indeed, if this  would be possible, then a continuity argument would imply that the contact is possible also with one face of the cube  on one of the bounding half planes, a contradiction.

\begin{figure}[h]
\begin{center}
\includegraphics[scale=.5]{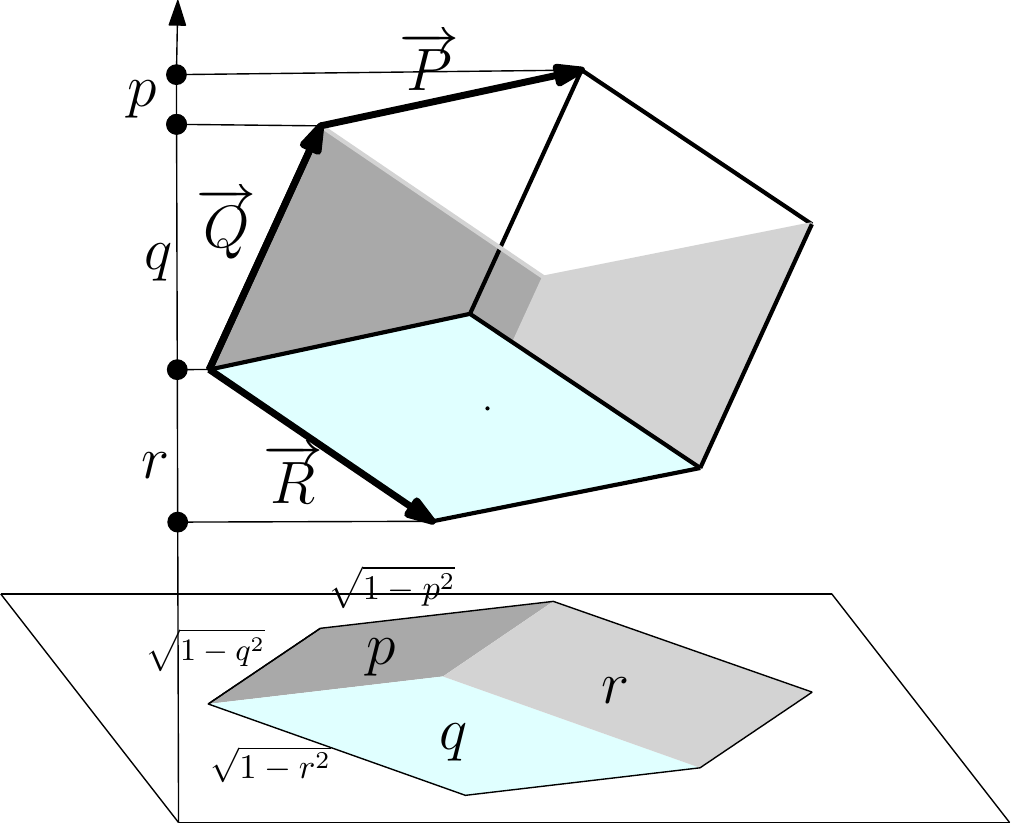}
\caption{Basic properties of the projections of a cube.}
\label{basic}
\end{center}
\end{figure}

Statement 2 is a well known property of the cube.  Figure~\ref{basic} becomes a proof without words, once it is noticed that the dot product of a unit edge vector and the vertical unit vector has two geometric meanings. On the one hand, it is equal to the area of the shadow of that face which is perpendicular to the unit edge vector. On the other hand, it is equal to the length of the perpendicular projection of the unit edge vector on the $z$-axis.

Statement 3 must be also an old known property of the cube.  Here we present a short proof via matrices. Let $\overrightarrow{P} , \overrightarrow{Q}$ and  $\overrightarrow{R}$ be the three normal vectors with third coordinates p, q and r. Let $A$ be the matrix whose row vectors are these normal vectors. The linear transformation $A$ is an isometry so for all column vectors $\alpha , \beta \in \mathcal R^3$, $(A \alpha )^T \cdot (A\beta ) = \alpha^T \cdot \beta$. Equivalently, $(\alpha^T A^T) \cdot (A\beta ) = \alpha^T \cdot \beta$. Thus,  $A^T A =I $, which in view of the row-column multiplication means that the column vectors of $A$ form an orthonormal basis. In particular, the third column vector is a unit vector, thus $p^2+q^2 +r^2=1$.
\end{proof}

Next, we will show Lemma \ref{squareatcorner}, which not only claims that the interior of every hexagonal shadow of a unit cube contains a unit square, but also explains, where such squares are located within a shadow.

\begin{lemma} \label{squareatcorner}
Let $H$ be a hexagonal projection of a unit cube (Figure~\ref{projections2}). Then, there are at least two pairs of opposite vertices of the hexagon $H$ so that each of these four (or six) vertices can share a vertex of a unit square in $B$. Moreover, each of these four (or six) squares are unique and have the following properties:
\begin{itemize}
  \item  The two vertices adjacent to the 'corner' vertex, lie on a pair of opposite sides of $H$.
  \item  The fourth vertices of these squares belong to interior of $H$.
  \item  Each of these squares can be moved in the interior of the hexagon $H$.
\end{itemize}
\end{lemma}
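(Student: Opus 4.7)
The plan is to exhibit the unit square at each admissible vertex explicitly, to show that its right angle at the corner is forced (not imposed) by the orthonormality of the cube's edge vectors, and then to count obstructions so as to deduce the ``at least four'' clause. Using Lemma~\ref{cube-basics}, I would let $\vec{u},\vec{v},\vec{w}$ be pairwise orthonormal face-normals with positive $z$-components $\alpha,\beta,\gamma$, so that $\alpha^2+\beta^2+\gamma^2=1$ with all three strictly positive (the condition that $H$ is a genuine hexagon). Writing $\pi$ for orthogonal projection onto the $xy$-plane, $H$ becomes the zonogon with generating side-vectors $\vec{a}_1=\pi(\vec{v})$, $\vec{a}_2=-\pi(\vec{u})$, $\vec{a}_3=\pi(\vec{w})$ in counterclockwise order, so the vertices are $V_0,V_1,\dots,V_5$ with $V_{k+1}-V_k$ cycling through $\vec{a}_1,\vec{a}_2,\vec{a}_3,-\vec{a}_1,-\vec{a}_2,-\vec{a}_3$. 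The sides adjacent to $V_0$ go in directions $\vec{a}_1$ and $\vec{a}_3$, while the pair of parallel sides $V_1V_2$ and $V_4V_5$ (both of direction $\vec{a}_2$) is the one not incident to $V_0$.

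Next I would construct the candidate square with corner $V_0$ and adjacent corners $P=V_1+t\vec{a}_2\in V_1V_2$ and $R=V_5+\sigma\vec{a}_2\in V_4V_5$, with parameters $t,\sigma$ determined by $|V_0P|=|V_0R|=1$. Since $V_0P=\pi(\vec{v}-t\vec{u})$ and $|\vec{v}-t\vec{u}|^2=1+t^2$ by orthonormality of $\vec{u}$ and $\vec{v}$, the equation $|\pi(\vec{v}-t\vec{u})|^2=1$ forces the $z$-component $\beta-t\alpha$ to equal $\pm t$, singling out the unique nonnegative root $t=\beta/(1+\alpha)\in(0,1)$; symmetrically $\sigma=\gamma/(1+\alpha)\in(0,1)$, so $P$ and $R$ lie strictly inside their respective sides. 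The heart of the proof is that the right angle $V_0P\perp V_0R$ is \emph{automatic} from the cube's structure rather than a third equation to be imposed: $(\vec{v}-t\vec{u})\cdot(\vec{w}-\sigma\vec{u})=t\sigma$ in three dimensions, because all cross terms vanish by $\vec{u}\cdot\vec{v}=\vec{v}\cdot\vec{w}=\vec{u}\cdot\vec{w}=0$; meanwhile the previous step shows that the $z$-components of the two 3D vectors are precisely $t$ and $\sigma$, so the two-dimensional dot product $V_0P\cdot V_0R$ equals $t\sigma-t\sigma=0$. Uniqueness of the square at $V_0$ then follows because the quadratic determining $t$ has only one nonnegative root.

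The fourth corner is $Q=V_0+\vec{a}_1+(t+\sigma)\vec{a}_2+\vec{a}_3$, so $Q\in H$ iff $t+\sigma\leq 1$, i.e., iff $\beta+\gamma\leq 1+\alpha$. Carrying out the same construction at the other two pairs of opposite hexagon vertices yields the cyclic inequalities $\alpha+\beta\leq 1+\gamma$ and $\alpha+\gamma\leq 1+\beta$. If any two of these three inequalities failed, then adding the corresponding strict reverses would give $2x>2$ for some $x\in\{\alpha,\beta,\gamma\}$, contradicting $x\leq 1$; hence at most one pair of opposite vertices is lost, so at least two pairs (four vertices, sometimes all six) support such a square. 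For the movability clause, translating the square by $\epsilon\vec{a}_2$ with $\epsilon>0$ small keeps it inside $H$: the direction $\vec{a}_2$ lies strictly inside the (obtuse, by Lemma~\ref{cube-basics}(1)) interior wedge at $V_0$, so $V_0$ moves into the interior of $H$, while $P$ and $R$ merely slide along their facing sides. I expect the orthogonality step to be the main technical obstacle; once one observes that $\beta-t\alpha$ is not merely $\pm t$ but in fact exactly $+t$, both the right angle at $V_0$ and the uniqueness of the square fall out of a single three-dimensional dot product, and the rest of the proof is essentially bookkeeping.
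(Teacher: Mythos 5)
Your construction and counting are correct and essentially the paper's own argument in cleaner vector notation: the paper likewise places the two neighbors of the corner vertex on the pair of opposite sides not incident to it, computes the offsets (its $b=q\sqrt{(1-p)/(1+p)}$ is your $t$ up to the normalization by $|\vec{a}_2|$), reduces containment of the fourth vertex to $q+r\le 1+p$, and notes that at most one of the three cyclic inequalities can fail. Your observation that the right angle at $V_0$ is \emph{automatic} --- via $(\vec{v}-t\vec{u})\cdot(\vec{w}-\sigma\vec{u})=t\sigma$ together with the $z$-components being exactly $t$ and $\sigma$ --- is in fact more explicit than the paper, which simply ``completes $A,B,C$ to a square'' without checking perpendicularity.

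The gap is in the third bullet. Translating by $\epsilon\vec{a}_2$ cannot place the square in the open interior of $H$: as you say yourself, $P$ and $R$ ``merely slide along their facing sides,'' i.e., they remain on the boundary. In fact \emph{no} translation can work, because $P$ and $R$ sit on the two parallel supporting lines of $H$ in the direction normal to $\vec{a}_2$, so the square's width in that direction equals the width of $H$ there; any translation that pulls $P$ off its side into the interior pushes $R$ outside the strip, and a translation parallel to those sides moves neither off its line. (Moreover, in the boundary case $\beta+\gamma=1+\alpha$, which genuinely occurs, e.g.\ $(\alpha,\beta,\gamma)=(1/3,2/3,2/3)$, your $Q$ is the opposite vertex $V_3$ and $Q+\epsilon\vec{a}_2$ leaves $H$ altogether.) A rotation is unavoidable: since the strip width is $(\beta+\gamma)/\sqrt{\beta^2+\gamma^2}\in(1,\sqrt2\,]$, a small rotation strictly shortens the projection of the diagonal $PR$ onto the normal of those sides, after which a small translation puts all four vertices strictly inside. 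This is exactly the paper's ``small shift followed by a small rotation,'' and it is this strict-interior statement that Theorems~\ref{rectangle-in-projection} and~\ref{largestsquare} actually use.
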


\begin{remark}
First we used the GeoGebra software to see how does the shadow of a cube change when a cube is rotated in $3$-space. The experiment made us  believe that there might exists a unit square in every shadow, so that the sqaure shares a vertex with the projection of the cube. The conjectured special position allowed us to get the affirmative answer by carrying out a straightforward four page long computation, using two variables only. The simple proof bellow is different, it is adjusted to the box version and it reveals more geometric reasons.
\end{remark}

\begin{figure}[h]
\begin{center}
\includegraphics[scale=.5]{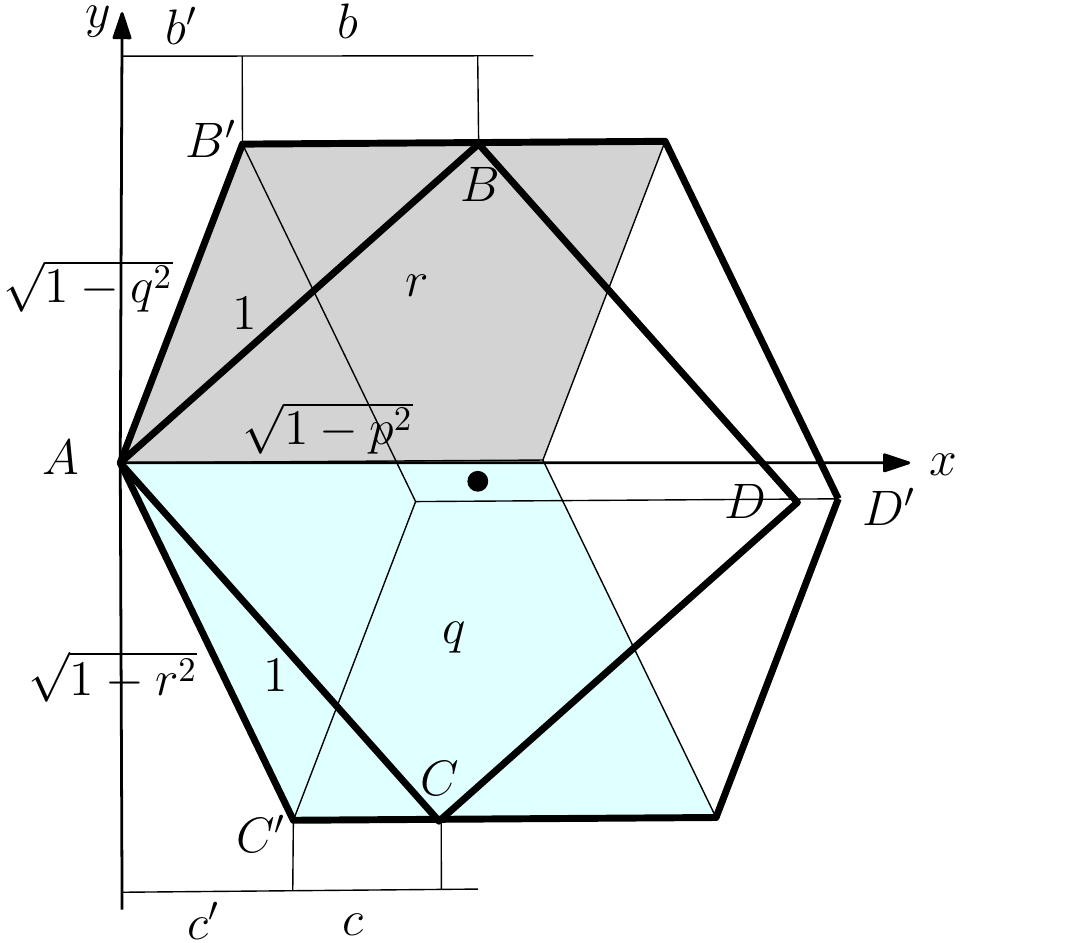}
\caption{Every projection of a unit cube contains a unit square.}
\label{projections2}
\end{center}
\end{figure}

\begin{proof}[Proof of Lemma \ref{squareatcorner}] Let $A$ be one of the  vertices of the hexagonal projection $H$ of the cube. Let $V$ be that vertex of the cube whose vertical projection is $A$. Let us label the three unit edge vectors emanating from $V$ by $\overrightarrow{P}, \overrightarrow{Q}$ and $\overrightarrow{R}$. If $p,q$ and $r$ denote the absolute values of the $z$-coordinates of these vectors, then the lengths of the projections of these vectors are $\sqrt{1-p^2}, \sqrt{1-q^2}$ and $\sqrt{1-r^2}$. Let us orient the hexagon $H$ so that one pair of opposite sides is horizontal and the lengths of these sides equal to $\sqrt{1-p^2}$.  Since all angles of $H$ are obtuse (Part 1 of Lemma~\ref{cube-basics}) we may assume the labelling of side lengths and the areas of the sub-parallelograms are as shown on Figure~\ref{projections2}.

In order to do a little computation, we introduce an $xy$-coordinate system centered at $A$ so that the horizontal line is the $x$ coordinate axis. Let $B'$ and $C'$ be the adjacent vertices of $A$ in $H$. Choose points $B$ and $C$ on the horizontal sides of $H$ at distance $1$ from $A$. Let $B$ the point at distance $b$ from $B'$ and $B'$ at distance $b'$ from the $y$-axis. Distances $c$ and $c'$ are introduced analogously for points $C$ and $C'$. We have

\noindent $(b+b')^2 = 1^2-(\frac r {\sqrt{1- p^2}})^2 = 1- \frac {r^2}{1-p^2} = \frac {q^2}{1-p^2}$,

\noindent $(b')^2 = 1- q^2 - (\frac r {\sqrt{1- p^2}})^2  = \frac {q^2}{1-p^2} - q^2 = \frac {p^2 q^2}{1-p^2}$,

\noindent $b = \frac q {\sqrt{1-p^2}} (1-p) = q \sqrt \frac {1-p}{1+p}$.

\noindent Similar expression holds for $c$: $c =  r \sqrt {\frac {1-p}{1+p}}$.

\noindent Thus, we have the following three equivalent inequalities,

\noindent $b+c  < \sqrt {(1-p^2)} \Leftrightarrow (q+r) \sqrt {\frac {1-p}{1+p}} < \sqrt{1-p^2} \Leftrightarrow  q+r < 1 +p$.

Let $D$ be the vertex which completes $A,B,C$ to a square. Since both $H$ and $ABCD$ are centrally symmetric, $D$ lies on the horizontal line through the vertex of $H$ opposite to $A$. Now, the first and the last equivalent inequalities mean that $D$ is inside of $H$ if $q+r \leq 1 +p$. Since all $p, q, r \in (0,1)$, this holds, if $p$ is the largest or the second largest among $p,q,r$, which proves the first two properties listed in Lemma~\ref{squareatcorner}. A small shift followed by a small rotation of these unit square ensures that there is a unit square which lies in the interior of the hexagon.
\end{proof}

\begin{definition} An infinite vertical cylinder, which is raised  over a horizontal rectangle  will be called {\it rectangular  tube}.
\end{definition}

We will need

\begin{lemma} \label{squaretube}
Every planar cross section of a rectangular tube contains a rectangle congruent to its base.
\end{lemma}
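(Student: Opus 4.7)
The plan is to set coordinates so that the tube is $T = [0,a] \times [0,b] \times \mathbb{R}$ and, after translating $P$ vertically (which leaves the cross section unchanged up to isometry in $P$), the cutting plane has equation $z = \alpha x + \beta y$. The horizontal case $\alpha = \beta = 0$ trivially gives $S = R$, so assume $(\alpha,\beta) \neq (0,0)$; then $S$ is the parallelogram whose four vertices are the vertical lifts of the four corners of the base $R$ to $P$, with side vectors $(a,0,\alpha a)$ and $(0,b,\beta b)$. The key reduction is the observation that a rectangle $Q \subset P$ of dimensions $a \times b$ lies inside the tube $T$ precisely when its vertical shadow on the horizontal plane $\{z=0\}$ lies inside the base rectangle $R$; so the task becomes to exhibit a rectangle of dimensions $a \times b$ in the plane $P$ whose shadow fits inside $R$.

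I parametrize candidate rectangles $Q(\theta)$ cent
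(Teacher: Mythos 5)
Your setup and reduction are sound and essentially match the paper's: placing the tube as $[0,a]\times[0,b]\times\mathbb{R}$, noting the cross section is the parallelogram spanned by $(a,0,\alpha a)$ and $(0,b,\beta b)$, and observing that a rectangle $Q$ in the cutting plane lies in the cross section exactly when its vertical shadow lies in the base $R$. This is the same reduction the paper achieves by folding the cutting plane into the horizontal plane, where the cross section becomes the image $P^*$ of $R$ under the stretch $(x,y)\mapsto(x,\tfrac{1}{\cos\alpha}y)$; finding a rectangle in $P$ whose shadow fits in $R$ is the same as finding a congruent copy of $R$ inside $P^*$.

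However, your argument stops precisely where the mathematical content begins. The entire difficulty of the lemma is the planar statement that the stretched parallelogram $P^*$ contains a congruent copy of $R$ (this is the special case of the K\'os--T\"or\H ocsik theorem that the paper cites), and you give no construction or existence argument for the rectangle $Q(\theta)$ --- the parametrization is announced and then the proof ends. The paper resolves this by labeling the vertices of $R$ by increasing $y$-coordinate, noting $P^*$ has obtuse angles at $2^*$ and $3^*$, and splitting into two cases according to how the diagonal $2^*3^*$ divides the obtuse angle at $2^*$: in one case a copy of $R$ with a diagonal along $2^*3^*$ fits; in the other, the side $2^*4^*$ has length greater than $a$ and the opposite sides are at distance at least $b$ apart, so a copy of $R$ with a vertex at $2^*$ and a side along $2^*4^*$ fits. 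If you intend a continuity argument over a rotating family of centered rectangles, you would still need to identify a quantity that changes sign (or otherwise certify that some angle $\theta$ works), and that is not obviously easier than the paper's two-case analysis; note that the paper's second case uses a rectangle anchored at a vertex of $P^*$, not a centered one. As written, the proposal is an incomplete proof with the key step missing.
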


\begin{remark}This lemma is a special case of a very strong theorem of K\'os and T\"or\H ocsik \cite{koos90}. They proved that every convex disc covers its shadow. Here we present a new short proof for Lemma~\ref{squaretube}, so that this paper remains self contained.
\end{remark}

\begin{figure}[h]
\begin{center}
\includegraphics[scale=.8]{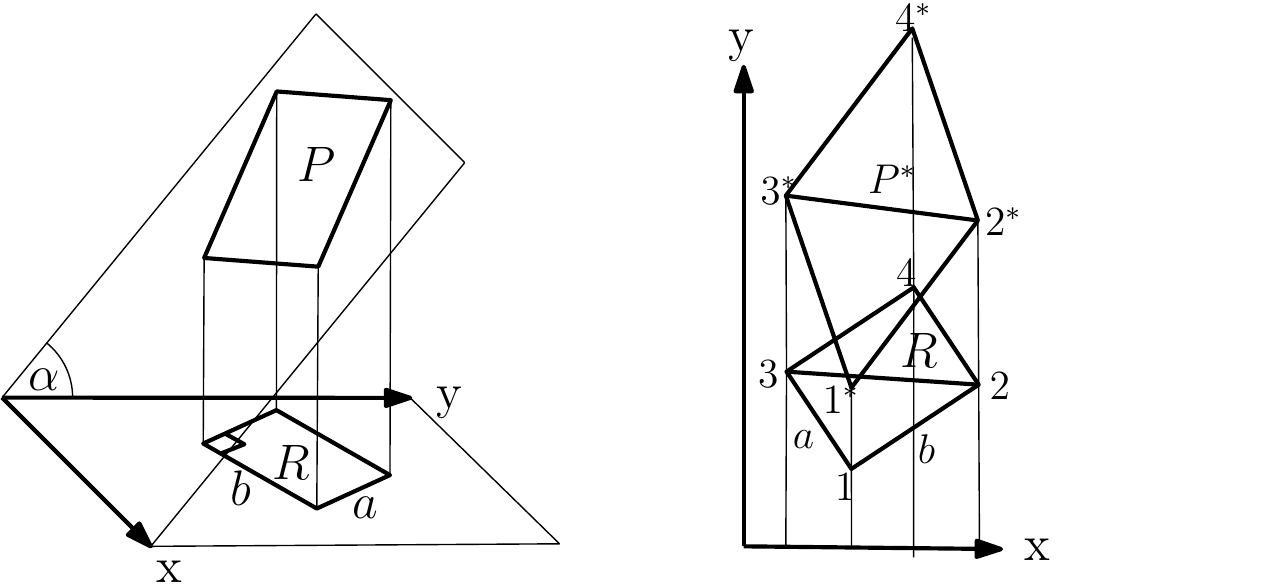}
\caption{The plane of cross section folded over the horizontal plane.}
\label{xy}
\end{center}
\end{figure}

\begin{proof}[Proof of Lemma \ref{squaretube}]
Let $R$ be a rectangle of sides $a$ and $b$, $a\leq b$. If the plane of the cross section is horizontal, then Lemma~\ref{squaretube} is obviously true. Otherwise, denote the angle between the plane of the tube's base $S$ and the plane of the cross section $P$ by $\alpha$ (Figure~\ref{xy}). $P$ is a parallelogram  with opposite sides at least at distances  $a$ and $b$ respectively. Introduce an $x,y$ coordinate system in the horizontal plane of  $R$ so that the $x$ axis belongs also to the plane of $P$.  Let us fold the plane of the cross section into the horizontal plane around the  $x$-axis. It is clear that there is a congruent image $P^*$ of  $P$ so that the linear map $(x,y) \rightarrow (x, \frac 1{\cos \alpha}  y)$ takes the rectangle $R$ to parallelogram $P^*$. We denote the image of a point $p$ under this mapping by $p^*$.

Let us label the vertices of rectangle $R$ by $1,2,3$ and $4$ in the order of their  $y$ coordinates (Figure 1). It is easy to see that $\frac 1{\cos \alpha} > 1$ implies that $P^*$ has obtuse angles at vertices $2^*$ and $3^*$. Depending on how the diagonal $2^* 3^*$  splits the obtuse angle at $2^*$, we distinguish two cases:

\noindent Case 1. Both sub-angles at $2^*$ are greater than equal to the corresponding sub-angles at $2$. In this case a centrally positioned copy of $R$ inside of $P^*$, so that one of the diagonal of $R$ lies on $2^* 3^*$, is contained in the parallelogram $P^*$.

\noindent Case 2. One of the sub-angles, say $\angle 3^* 2^* 4^*$, is  less than  the corresponding sub-angle at $2$. In this case, the length of  the side $2^* 4^*$ is greater than  the length of side $24$, which is $a$. Since the distance between the sides $2^* 4^*$ and $1^* 3^*$ is at least $b$, the rectangle, which has a vertex at $2^*$ and has a side of length $a$ on the segment  $2^* 4^*$ is contained in the parallelogram $P^*$.
\end{proof}

\section{Proof of Theorems}

\begin{proof} [Proof of Theorem~\ref{rectangle-in-projection}] Let $B$ be a rectangular box with edge lengths $a \leq b \leq c$. Shorten the longest edges of $B$ to $b$. It is enough to show that the new box $B'$ contains a rectangle of dimensions $a,b$. Let hexagon $H$ be a shadow of $B'$. Two of the six sides of $H$ are projections of edges of length $a$, while the remaining four sides are projections of  edges of length $b$. To depict this property we label the sides and their lengthes with $pr(a)$ or $pr(b)$  on Figure~\ref{projections}. We have two pairs of opposite vertices which are endpoints of $pr(a)$  sides. In view of Lemma~\ref{squareatcorner} at least one of these two pairs have vertices, say $A$ and $A'$,  so that moving a cube with edge length $a$ to the corner of the pre-image  of $A$, the projection of the cube contain a square whose vertex is $A$. Now it is easy to see that one can extend one pair of parallel sides of this square to $b$ so that the new rectangle is still contained in $H$, what we wanted.  A small shift followed by a small rotation of this rectangle ensures that there is a unit square which lies in the interior of the hexagon.
\end{proof}

\begin{figure}[h]
\begin{center}
\includegraphics[scale=.6]{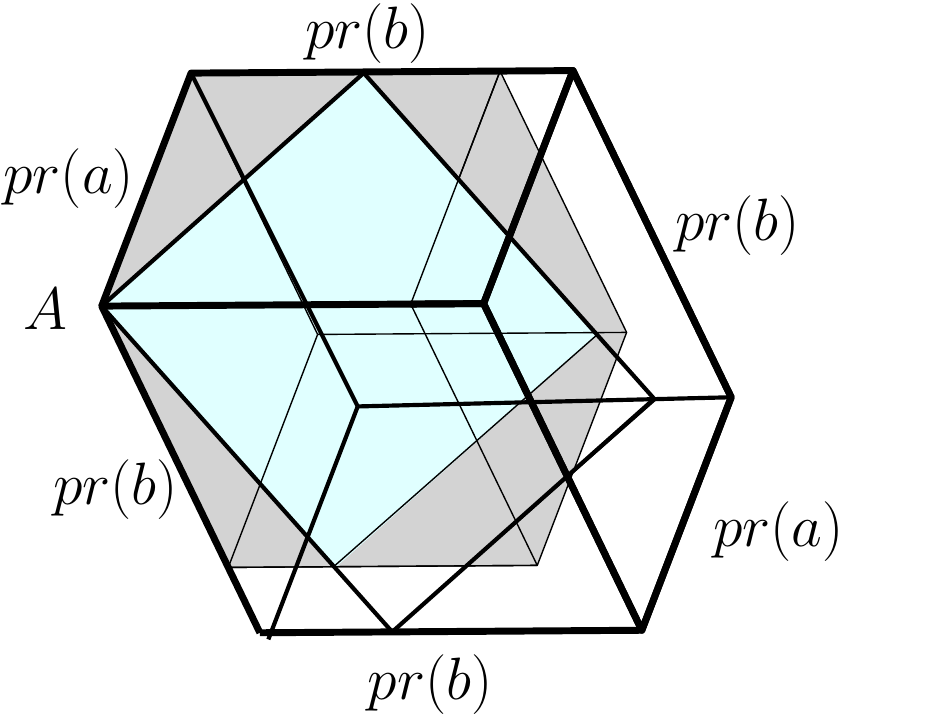}
\caption{Every projection of a box contains a copy of the smallest face. }
\label{projections}
\end{center}
\end{figure}

\begin{proof} [Proof of Theorem~\ref{largestsquare}] Assume that a homothetic box $\lambda B$ can be passed through a stationary box $B$ with sides $a \leq b \leq c$. Although  Theorem~\ref{largestsquare} is obviously true if $\lambda \leq 1$, we present an argument which works for all $\lambda$. On Figure~\ref{largest}, which illustrates the proof, we choose a $\lambda$ much smaller than $1$ so that the figure becomes less crowded and the proof can be better understood. Let $H_1$ be the shadow of the stationary rectangular box $B$ and $H_{\lambda}$ be the shadow of the traveling homothetic rectangular box $\lambda B$. We proved already that the hexagon $H_{\lambda}$ contains a rectangle $\lambda  R$ of sides  $\lambda a$ and $\lambda b$. Since both $H_1$ and $\lambda  R$ are central symmetric, the rectangle $\lambda  R$ can be shifted in center position while remaining inside of the hexagon $H_1$. Now, choose two adjacent vertices, say $U$ and $V$, of this relocated rectangle. Since $U, V$ are inside of the shadow of $B$, we can choose two points  $U^*$, $V^*$ inside of the box $B$ directly over $U$ and $V$. Finally, reflect $U^*$, $V^*$ through the center of the box $B$ two get a parallelogram $P$ in box $B$. $P$'s shadow is the rectangle of sides $a$ and $b$.  According to Lemma~\ref{squaretube},  parallelogram $P$ contains a rectangle of sides $a$ and $b$.
\end{proof}

\begin{figure}[h]
\begin{center}
\includegraphics[scale=.8]{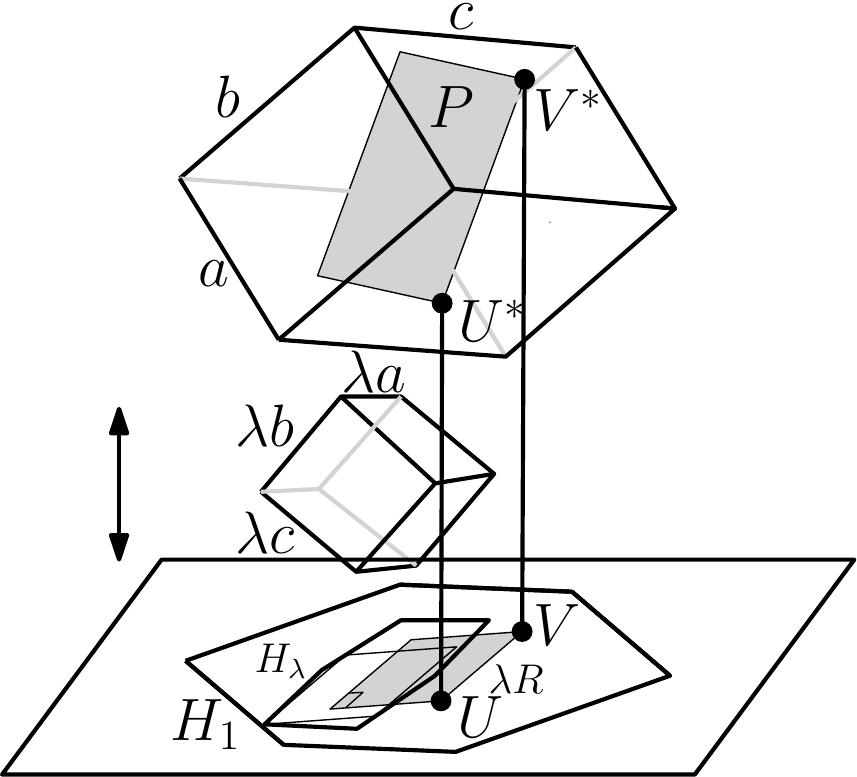}
\caption{Proof of Theorem~\ref{largestsquare}.}
\label{largest}
\end{center}
\end{figure}


\begin{thebibliography}{15}

\bibitem{chai18} Y. Chai, L. Yuan and T. Zamfirescu, \emph{Rupert properties of archimedean solids}, \textit{Amer. Math. Monthly} \textbf{ 125:6} (2018), 497--504.


\bibitem{gardner01} M. Gardner,  \textit{The colossal book of mathematics}, W.W. Norton, New York, 2001.

\bibitem{ehrenfeucht64} A. Ehrenfeucht, \textit{The cube made interesting}, Pergamon Press, Oxford, 1964.

\bibitem{huber18} G. Huber, K.P. Schultz and J.E. Wetzel, The n-cube is Rupert \textit{Amer. Math. Monthly} \textbf{ 125:6} (2018), 505--512.

\bibitem{jerrard08} R.P. Jerrard and J.E. Wetzel, Universal stoppers are Rupert \textit{College Math. J.}  \textbf{ 39:2} (2008), 90--94.

\bibitem{jerrard04} R.P. Jerrard and J.E. Wetzel, Prince Rupert's rectangles \textit{Amer. Math. Monthly} \textbf{ 111} (2004), 22--31.

\bibitem{jerrard17} R.P. Jerrard, J.E. Wetzel and L. Yuan, Platonic  passages \textit{Math. Mag.} \textbf{ 90:2}(2017), 87--98.

\bibitem{koos90} G. K\'os and J. T\"or\H ocsik,  Convex discs can cover their shadows \textit{Discr. Comp. Geom.}  \textbf{ 5} (1990), 529--531.

\bibitem{rickey05} V.F. Rickey, \textit{D\"urer’s Magic Square, Cardano’s Rings, Prince Rupert’s Cube, and Other Neat Things} MAA Short Course "Recreational Mathematics", Albuquerque, New Mexico, 23 August (2005).

\bibitem{schrek50} D.J.E. Schrek,  Prince Rupert's problem and its extensions by Pieter Nieuwland \textit{Scripta Math.} \textbf{ 16} (1950), 73--80, 261--267.

\bibitem{scriba68} C.J. Scriba,  Das problem des Prinzen Ruprecht von der Pfalz \textit{Praxis der Math.} \textbf{ 10(9)} (1968), 241--246.

\bibitem{swinden16} J.H. Van Swinden,  \textit{Grondbeginsels der Meetkunde}, 2nd ed., Amsterdam, 1816, 512--513.

\bibitem{wetzel00} J.E. Wetzel, Rectangles in rectangles \textit{Math. Mag.}  \textbf{ 73} (2000), 204--211.

\end{thebibliography}
\end{document}